 \newtheorem{thm}{Theorem}[section]
 \theoremstyle{definition}
 \theoremstyle{remark}
 \numberwithin{equation}{section}
\begin{document}

\title[ Bicomplex k-Fibonacci quaternions]{
\\ \\ 
Bicomplex k-Fibonacci quaternions}

\author[F\"{u}gen Torunbalc{\i} Ayd{\i}n]{F\"{u}gen Torunbalc{\i}  Ayd{\i}n}
\address{%
Yildiz Technical University\\
Faculty of Chemical and Metallurgical Engineering\\
Department of Mathematical Engineering\\
Davutpasa Campus, 34220\\
Esenler, Istanbul,  TURKEY}

\email{ftorunay@gmail.com ; faydin@yildiz.edu.tr}

\thanks{*Corresponding Author}

\keywords{ Bicomplex number; k-Fibonacci number; bicomplex k-Fibonacci number; k-Fibonacci quaternion; bicomplex k-Fibonacci quaternion. }

\begin{abstract}
In this paper, bicomplex k-Fibonacci quaternions are defined. Also, some algebraic properties of bicomplex k-Fibonacci quaternions which are connected with bicomplex numbers and k-Fibonacci numbers are investigated. Furthermore, the Honsberger identity, the d'Ocagne's identity, Binet's formula, Cassini's identity, Catalan's identity for these quaternions are given.
\end{abstract}

\maketitle

\section{Introduction}
Many kinds of generalizations of the Fibonacci sequence have been presented in the literature \cite{9}. In 2007, the k-Fibonacci sequence $\{F_{k,n}\}_{n\in\mathbb{N}}$ is defined by Falcon and Plaza \cite{4} as follows
\begin{equation}\label{E1}
\left\{\begin{array}{rl}
{{F}_{k,0}}=&0,\,\,{{F}_{k,1}}=1 \\
{{F}_{k,n+1}}=&k\,{{F}_{k,n}}+\,{{F}_{k,n-1}},\,\ n\geq 1 \\

or \\

\{{F}_{k,n}\}_{n\in\mathbb{N}}=&\{\,0,\,1,\,k,\,k^2+1,\,k^3+2\,k,\,k^4+3\,k^2+1,...\}\\
\end{array}\right.
\end{equation} \\
Here, ${{k}}$  is a positive real number. Recently, Falcon and Plaza worked on k-Fibonacci numbers, sequences and matrices in \cite{5}, \cite{6}, \cite{7}, \cite{8}.\\
In 2010, Bolat and K\"{o}se \cite{2} gave properties of k-Fibonacci numbers.\\
In 2014, Catarino \cite{3} obtained some identities for k-Fibonacci numbers. \\ 
In 2015, Ramirez \cite{16} defined the the k-Fibonacci and the k-Lucas quaternions as follows:
\begin{equation*}
\begin{aligned}
{D}_{k,n}=&\{{F}_{k,n}+i\,{F}_{k,n+1}+j\,\,{F}_{k,n+2}+k\,{F}_{k,n+3}\,| {F}_{k,n},\, n-th\,\, \\ & \text{ k-Fibonacci number} \},
\end{aligned}
\end{equation*} and
\begin{equation*}
\begin{aligned}
{P}_{k,n}=&\{{L}_{k,n}+i\,{L}_{k,n+1}+j\,\,{L}_{k,n+2}+k\,{L}_{k,n+3}\,| {L}_{k,n},\, n-th\,\, \\ & \text{ k-Lucas number} \}
\end{aligned}
\end{equation*}
where ${\,i,\,j,\,k\,}$ satisfy the multiplication rules
\begin{equation*}
{i}^{2}={j}^{2}={k}^{2}=-1\,,\ \ i\ j=-j\ i=k\,,\quad j\ k=-k \ j=i\,,\quad k\ i=-i\ k=j\,.
\end{equation*} 
\par In 2015, Polatl{\i} gave Catalan’s identity for the k-Fibonacci quaternions \cite{13}.
\par In 2016, Polatl{\i}, K{\i}z{\i}ılate\c{s} and Kesim \cite{14} defined split k-Fibonacci and split k-Lucas quaternions $({M}_{k,n})$ and $({N}_{k,n})$ respectively as follows:
\begin{equation*}
\begin{aligned}
{M}_{k,n}=&\{{F}_{k,n}+i\,{F}_{k,n+1}+j\,\,{F}_{k,n+2}+k\,{F}_{k,n+3}\,| {F}_{k,n},\, n-th\,\, \\ & \text{ k-Fibonacci number} \}
\end{aligned}
\end{equation*}
where ${\,i,\,j,\,k\,}$ are split quaternionic units which satisy the multiplication rules
\begin{equation*}
{i}^{2}=-1,\,{j}^{2}={k}^{2}=i\ j\ k=1\,,\ \ i\,j=-j\, i=k,\,j\,k=-k\,j=-i,\,k\,i=-i\,k=j.
\end{equation*}
 \par  In 1892, bicomplex numbers were introduced by Corrado Segre, for the first time \cite{20}. In 1991, G. Baley Price, the bicomplex numbers gave in his book based on multicomplex spaces and functions \cite{15}. In recent years, fractal structures of this numbers are studied \cite{17}, \cite{18}, \cite{19}, \cite{11}. In 2015, Karaku{\c{s}}, S{\i}dd{\i}ka {\"O}zkald{\i} and Aksoyak, Ferdag Kahraman worked on generalized bicomplex numbers and Lie Groups \cite{10}. The set of bicomplex numbers can be expressed by a basis $\{1\,,i\,,j\,,i\,j\,\}$ as, 
\begin{equation}\label{E2}
\begin{aligned}
\mathbb{C}_2=\{\, q=q_1+iq_2+jq_3+ijq_4 \ | \ q_1,q_2,q_3,q_4\in \mathbb R\}
\end{aligned}
\end{equation}
where $i$,$j$ and $ij$ satisfy the conditions 
\begin{equation*}
i^2=-1,\,\,\,j^2=-1,\,\,\,i\,j=j\,i.  
\end{equation*} \,
\par A set of bicomplex numbers $\mathbb{C}_2$ is a real vector space with the addition and scalar multiplication operations. The vector space $\mathbb{C}_2$ equipped with bicomplex product is a real  associative algebra Table 1. Also, the vector space together with properties of multiplication and product of the bicomplex numbers is an commutative algebra. Furthermore, three different conjugations can operate on bicomplex numbers \cite{17},\cite{18} as follows: 
\begin{equation}\label{E3}
\begin{aligned}
q=q_1+i\,q_2+j\,q_3+i\,j\,q_4=(q_1+iq_2)+j\,(q_3+iq_4),\,\, q\in{\mathbb{C}_{2}}\\ 
{q_i}^*=q_1-iq_2+jq_3-ijq_4=(q_1-iq_2)+j\,(q_3-iq_4),\\
{q_j}^*=q_1+iq_2-jq_3-ijq_4=(q_1+iq_2)-j\,(q_3+iq_4),\\
{q_{ij}}^*=q_1-iq_2-jq_3+ijq_4=(q_1-iq_2)-j\,(q_3-iq_4).
\end{aligned}
\end{equation} \\
The norm of the bicomplex numbers is defined as 
\begin{equation}\label{E4}
\begin{aligned}
{{N}_{q}}_{i}=\left\| {q\times{q}_{i}} \right\|=\sqrt[]{\left|{q}_{1}^2+{q}_{2}^2-{q}_{3}^2-{q}_{4}^2+2\,j\,({q}_{1}{q}_{3}+{q}_{2}{q}_{4})\right|}, \\ 
{{N}_{q}}_{j}=\left\| {q\times{q}_{j}} \right\|=\sqrt[]{\left|{q}_{1}^2-{q}_{2}^2+{q}_{3}^2-{q}_{4}^2+2\,i\,({q}_{1}{q}_{2}+{q}_{3}{q}_{4})\right|}, \\ 
{{N}_{q}}_{i\,j}=\left\| {q\times{q}_{i\,j}} \right\|=\sqrt[]{\left|{q}_{1}^2+{q}_{2}^2+{q}_{3}^2+{q}_{4}^2+2\,i\,j\,({q}_{1}{q}_{4}-{q}_{2}{q}_{3})\right|}.  
\end{aligned}
\end{equation} 
\begin{table}[ht]
\caption{Multiplication scheme of bicomplex numbers} 
\centering 
\begin{tabular}{c c c c c} 
\\
\hline 
x & 1 & i & j & i\,j \\ [0.5ex] 
\hline 
1 & 1 & i & j & i\,j \\ 
i & i & -1 & i\,j & -j \\
j & j & i\,j & -1 & -i \\
i\,j & i\,j & -j & -i & 1 \\
 [1ex] 
\hline 
\end{tabular}
\label{table:nonlin} 
\end{table}
\\
In 2015, the bicomplex Fibonacci and Lucas numbers defined by Nurkan and G\"{u}ven \cite{12} as follows
\begin{equation}\label{E5}
{BF}_{n}={F}_{n}+i\,{F}_{n+1}+j\,{F}_{n+2}+k\,{F}_{n+3} 
\end{equation}
and
\begin{equation}\label{E6}
{BL}_{n}={L}_{n}+i\,{L}_{n+1}+j\,{L}_{n+2}+k\,{L}_{n+3}
\end{equation}
where the basis $\{1,\,i,\,j,\,k \}$ satisfy the conditions 
\begin{equation*}
i^2=j^2=-1,\,\,k^2=1,\,i\,j=j\,i=k,\,j\,k=k\,j=-i,\,i\,k=k\,i=-j.  
\end{equation*}
In 2018, the bicomplex Fibonacci quaternions defined by Ayd{\i}n Torunbalc{\i} \cite{1} as follows
\begin{equation}\label{E7}
{Q_F}_{n}={F}_{n}+i\,{F}_{n+1}+j\,{F}_{n+2}+i\,j\,{F}_{n+3} 
\end{equation}
where the basis $\{1,\,i,\,j,\,i\,j \}$ satisfy the conditions 
\begin{equation*}
i^2=-1,\,\,j^2=-1,\,\,i\,j=j\,i,\,\,(i\,j)^2=1.  
\end{equation*}
In this paper, the bicomplex k-Fibonacci quaternions and the bicomplex k-Lucas quaternions will be defined respectively, as follows  
\begin{equation*}
\begin{aligned}
{\mathbb{BC}}^{F_{k,n}}=\{\,{Q_F}_{k,n}=&{F}_{k,n}+i\,{F}_{k,n+1}+j\,{F}_{k,n+2}+i\,j\,{F}_{k,n+3}\,| \, {F}_{k,n},\, nth\, \\ & \text{k-Fibonacci number} \}
\end{aligned}
\end{equation*}
and
\begin{equation*}
\begin{aligned}
{\mathbb{BC}}^{L_{k,n}}=\{\,{{Q}_L}_{k,n}=&{L}_{k,n}+i\,{L}_{k,n+1}+j\,{L}_{k,n+2}+i\,j\,{L}_{k,n+3}\,| \, {L}_{k,n},\, nth\, \\ & \text{k-Lucas number} \}
\end{aligned}
\end{equation*}
where
\begin{equation*}
i^2=-1,\,\,j^2=-1,\,\,i\,j=j\,i,\,\,(i\,j)^2=1.  
\end{equation*}
The aim of this work is to present in a unified manner a variety of algebraic properties of the bicomplex k-Fibonacci quaternions as well as both the bicomplex numbers and k-Fibonacci numbers. In particular, using three types of conjugations, all the properties established for bicomplex numbers and bicomplex k-Fibonacci numbers are also given for the bicomplex k-Fibonacci quaternions. In addition, Binet's Formula, the Honsberger identity, the d'Ocagne's identity, Cassini's identity and Catalan's identity for these quaternions are given.

\section{The bicomplex k-Fibonacci numbers}
The bicomplex k-Fibonacci and k-Lucas numbers can be define by  with the basis $\{1,\,i,\,j,\,i\,j\,\}$, where $i$,\,\,$j\,$ \,and\, $i\,j\,$ satisfy the conditions 
\begin{equation*}
i^2=-1,\,\,j^2=-1,\,\,i\,j=j\,i,\,\,(i\,j)^2=1.  
\end{equation*}
as follows
\begin{equation}\label{F1}
\begin{aligned}
{\mathbb{BC}F_{k,n}}=&({F}_{k,n}+i\,{F}_{k,n+1})+j\,({F}_{k,n+2}+i\,{F}_{k,n+3}) \\
=& {F}_{k,n}+i\,{F}_{k,n+1}+j\,{F}_{k,n+2}+i\,j\,{F}_{k,n+3}
\end{aligned}
\end{equation}
and
\begin{equation}\label{F2}
\begin{aligned}
{\mathbb{BC}L_{k,n}}=&({L}_{k,n}+i\,{L}_{k,n+1})+j \,({L}_{k,n+2}+i\,{L}_{k,n+3}) \\
=& {L}_{k,n}+i\,{L}_{k,n+1}+j\,{L}_{k,n+2}+i\,j\,{L}_{k,n+3}.
\end{aligned}
\end{equation}
 
The addition and subtraction of two bicomplex k-Fibonacci numbers are defined by 
\begin{equation}\label{F3}
\begin{array}{rl}
{\mathbb{BC}F}_{k,n}\pm{\mathbb{BC}F}_{k,m}=&({F}_{k,n}\pm{F}_{k,m})+i\,({F}_{k,n+1}\pm{F}_{k,m+1}) \\
&+j\,({F}_{k,n+2}\pm{F}_{k,m+2})+i\,j\,({F}_{k,n+3}\pm{F}_{k,m+3}) \\
\end{array}
\end{equation}

The multiplication of two bicomplex k-Fibonacci numbers is defined by
\begin{equation}\label{F4}
\begin{array}{rl}
{\mathbb{BC}F_{k,n}}\times\,{\mathbb{BC}F_{k,m}}=&({F}_{k,n}\,{F}_{k,m}-{F}_{k,n+1}\,{F}_{k,m+1} \\
&-{F}_{k,n+2}\,{F}_{k,m+2}-{F}_{k,n+3}\,{F}_{k,m+3}) \\
&+i\,({F}_{k,n}\,{F}_{k,m+1}+{F}_{k,n+1}\,{F}_{k,m} \\
&-{F}_{k,n+2}\,{F}_{k,m+3}-{F}_{k,n+3}\,{F}_{k,m+2}) \\
&+j\,({F}_{k,n}\,{F}_{k,m+2}+{F}_{k,n+2}\,{F}_{k,m} \\
&-{F}_{k,n+1}\,{F}_{k,m+3}-{F}_{k,n+3}\,{F}_{k,m+1}) \\
&+i\,j\,({F}_{k,n}\,{F}_{k,m+3}+{F}_{k,n+3}\,{F}_{k,m} \\
&+{F}_{k,n+1}\,{F}_{k,m+2}+{F}_{k,n+2}\,{F}_{k,m+1}) \\
=&{\mathbb{BC}F_{k,m}}\times\,{\mathbb{BC}F_{k,n}}\,.
\end{array}
\end{equation} 

\section{The bicomplex k-Fibonacci quaternions}

In this section, firstly the bicomplex k-Fibonacci quaternions will be defined. The bicomplex k-Fibonacci quaternions are defined by using the bicomplex numbers and k-Fibonacci numbers as follows 
\begin{equation}\label{G1}
\begin{aligned}
\mathbb{BC}^{F_{k,n}}=\{\,{{Q}_F}_{k,n}=&{F}_{k,n}+i\,{F}_{k,n+1}+j\,{F}_{k,n+2}+i\,j\,{F}_{k,n+3}\,|\, {F}_{k,n},\, n-th \\
& \quad \quad \quad \quad \,\text{k-Fib. number} \, \}
\end{aligned}
\end{equation}
where
\begin{equation*}
i^2=-1,\,\,j^2=-1,\,\,i\,j=j\,i,\,\,(i\,j)^2=1.  
\end{equation*}

Let ${Q_F}_{k,n}$ and ${Q_F}_{k,m}$ be two bicomplex k-Fibonacci quaternions such that
\begin{equation}\label{G2} 
{Q_F}_{k,n}={F}_{k,n}+i\,{F}_{k,n+1}+j\,{F}_{k,n+2}+i\,j\,{F}_{k,n+3}
\end{equation}
and 
\begin{equation}\label{G3}
{Q_F}_{k,m}={F}_{k,m}+i\,{F}_{k,m+1}+j\,{F}_{k,m+2}+i\,j\,{F}_{k,m+3}.
\end{equation}

The addition and subtraction of two bicomplex k-Fibonacci  quaternions are defined in the obvious way,  
\begin{equation}\label{G4}
 \begin{array}{rl}
{Q_F}_{k,n}\pm{Q_F}_{k,m}=&({F}_{k,n}+i\,{F}_{k,n+1}+j\,{F}_{k,n+2}+i\,j\,{F}_{k,n+3}) \\ &\pm ({F}_{k,m}+i\,{F}_{k,m+1}+j\,{F}_{k,m+2}+i\,j\,{F}_{k,m+3}) \\
=&({F}_{k,n}\pm{F}_{k,m})+i\,({F}_{k,n+1}\pm{F}_{k,m+1}) \\
&+j\,({F}_{k,n+2}\pm{F}_{k,m+2})+ i\,j\,({F}_{k,n+3}\pm{F}_{k,m+3}).
\end{array}
\end{equation}

Multiplication of two bicomplex k-Fibonacci  quaternions is defined by 
\begin{equation}\label{G5}
\begin{array}{rl}
{Q_F}_{k,n}\times\,{Q_F}_{k,m}=&({F}_{k,n}+i\,{F}_{k,n+1}+j\,{F}_{k,n+2}+i\,j\,{F}_{k,n+3}) \\&\,({F}_{k,m}+i\,{F}_{k,m+1}+j\,{F}_{k,m+2}+i\,j\,{F}_{k,m+3}) \\
=&[{F}_{k,n}\,{F}_{k,m}-{F}_{k,n+1}\,{F}_{k,m+1} \\
&\quad \quad-{F}_{k,n+2}\,{F}_{k,m+2}+{F}_{k,n+3}\,{F}_{k,m+3}] \\
&+i\,[{F}_{k,n}\,{F}_{k,m+1}+{F}_{k,n+1}\,{F}_{k,m} \\
&\quad \quad-{F}_{k,n+2}\,{F}_{k,m+3}-{F}_{k,n+3}\,{F}_{k,m+2}] \\
&+j\,[{F}_{k,n}\,{F}_{k,m+2}-{F}_{k,n+1}\,{F}_{k,m+3} \\
&\quad \quad+{F}_{k,n+2}\,{F}_{k,m}-{F}_{k,n+3}\,{F}_{k,m+1}] \\
&+i\,j\,[{F}_{k,n}\,{F}_{k,m+3}+{F}_{k,n+1}\,{F}_{k,m+2} \\
&\quad \quad+{F}_{k,n+2}\,{F}_{k,m+1}+{F}_{k,n+3}\,{F}_{k,m}] \\
=&{Q_F}_{k,m}\times\,{Q_F}_{k,n}\,.
\end{array}
\end{equation}

The scaler and the bicomplex vector parts of the bicomplex k-Fibonacci  quaternion $({Q_F}_{k,n})$ are denoted by 
\begin{equation}\label{G6}
{S}_{\,{{Q}_F}_{k,n}}={F}_{k,n} \ \ \text{and} \ \ \ {V}_{\,{{Q}_F}_{k,n}}=i\,{F}_{k,n+1}+j\,{F}_{k,n+2}+i\,j\,{F}_{k,n+3}.	
\end{equation}
Thus, the bicomplex k-Fibonacci quaternion ${Q_F}_{k,n}$  is given by 
\begin{equation*}
{Q_F}_{k,n}={S}_{\,{Q_F}_{k,n}}+{V}_{\,{Q_F}_{k,n}}. 
\end{equation*}           
Three kinds of conjugation can be defined for bicomplex numbers \cite{18}. Therefore, conjugation of the bicomplex k-Fibonacci quaternion is defined in five different ways as follows
\begin{equation} \label{G7}
\begin{aligned}
({Q_F}_{k,n})^{*_1}=&{F}_{k,n}-i\,{F}_{k,n+1}+j\,{F}_{k,n+2}-i\,j\,{F}_{k,n+3}, \\
\end{aligned}
\end{equation}
\begin{equation} \label{G8}
\begin{aligned}
({Q_F}_{k,n})^{*_2}=&{F}_{k,n}+i\,{F}_{k,n+1}-j\,{F}_{k,n+2}-i\,j\,{F}_{k,n+3}, \\ 
\end{aligned}
\end{equation}
\begin{equation} \label{G9}
\begin{aligned}
({Q_F}_{k,n})^{*_3}=&{F}_{k,n}-i\,{F}_{k,n+1}-j\,{F}_{k,n+2}+i\,j\,{F}_{k,n+3}, \\
\end{aligned}
\end{equation}
\\
In the following theorem, some properties related to the conjugations of the bicomplex k-Fibonacci quaternions are given.
\begin{thm}
Let $({Q_F}_{k,n})^{*_1}$,\,$({Q_F}_{k,n})^{*_2}$ and \,$({Q_F}_{k,n})^{*_3}$,\, be three kinds of conjugation of the bicomplex k-Fibonacci quaternion. In this case, we can give the following relations:
\begin{equation}\label{G10}
{Q_F}_{k,n}\,({Q_F}_{k,n})^{*_1}=F_{k,2n+1}-F_{k,2n+5}+2\,j\,{F}_{k,2n+3}, \\
\end{equation}
\begin{equation}\label{G11}
\begin{array}{rl}
{Q_F}_{k,n}\,({Q_F}_{k,n})^{*_2}=&(F_{k,n}^2-F_{k,n+1}^2+F_{k,n+2}^2-F_{k,n+3}^2) \\
&+2\,i\,(2\,{F}_{k,n}\,{F}_{k,n+1}+k\,{F}_{k,2n+3}), \\
\end{array}
\end{equation}
\begin{equation}\label{G12}
{Q_F}_{k,n}\,({Q_F}_{k,n})^{*_3}=(F_{k,2n+1}+{F}_{k,2n+5})+2\,i\,j\,(-1)^{n+1}\,k, \\
\end{equation}
\end{thm}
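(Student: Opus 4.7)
The plan is to exploit the bicomplex decomposition $\mathbb{BC}^{F_{k,n}} = \mathbb{C}(i) + j\,\mathbb{C}(i)$, writing ${Q_F}_{k,n} = A + jB$ with $A = F_{k,n} + i\,F_{k,n+1}$ and $B = F_{k,n+2} + i\,F_{k,n+3}$. Since $j$ commutes with $i$ and $j^2=-1$, the three conjugations (3.7)--(3.9) take the compact form $({Q_F}_{k,n})^{*_1} = \bar A + j\bar B$, $({Q_F}_{k,n})^{*_2} = A - jB$, and $({Q_F}_{k,n})^{*_3} = \bar A - j\bar B$, where the bar denotes ordinary complex conjugation in $i$. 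A direct multiplication then gives
\begin{align*}
{Q_F}_{k,n}\,({Q_F}_{k,n})^{*_1} &= (|A|^2 - |B|^2) + 2j\,\Re(A\bar B),\\
{Q_F}_{k,n}\,({Q_F}_{k,n})^{*_2} &= A^2 + B^2,\\
{Q_F}_{k,n}\,({Q_F}_{k,n})^{*_3} &= (|A|^2 + |B|^2) - 2ij\,\Im(A\bar B),
\end{align*}
reducing the theorem to four scalar computations in the ring of k-Fibonacci numbers.

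To finish (3.10) and (3.12) I would invoke three standard k-Fibonacci identities, each provable by a short induction on the recurrence (1.1) (or citeable from \cite{3} and \cite{4}): the sum-of-squares $F_{k,n}^2 + F_{k,n+1}^2 = F_{k,2n+1}$, the convolution $F_{k,n}F_{k,n+2} + F_{k,n+1}F_{k,n+3} = F_{k,2n+3}$, and the d'Ocagne-type $F_{k,n+1}F_{k,n+2} - F_{k,n}F_{k,n+3} = (-1)^n k$. These yield $|A|^2 = F_{k,2n+1}$, $|B|^2 = F_{k,2n+5}$, $\Re(A\bar B) = F_{k,2n+3}$, and $\Im(A\bar B) = (-1)^n k$, so substitution into the first and third displays produces (3.10) and (3.12) at once.

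For (3.11), expanding $A^2+B^2$ gives
\begin{equation*}
(F_{k,n}^2 - F_{k,n+1}^2 + F_{k,n+2}^2 - F_{k,n+3}^2) + 2i\,\bigl(F_{k,n}F_{k,n+1} + F_{k,n+2}F_{k,n+3}\bigr),
\end{equation*}
so it suffices to show $F_{k,n+2}F_{k,n+3} - F_{k,n}F_{k,n+1} = k\,F_{k,2n+3}$. Substituting the recurrence twice produces $F_{k,n+2}F_{k,n+3} = F_{k,n}F_{k,n+1} + k(F_{k,n+1}^2 + F_{k,n+2}^2)$, and a shifted instance of the sum-of-squares identity converts the parenthesised expression to $F_{k,2n+3}$. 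The main obstacle is precisely this last algebraic rewriting: the three auxiliary k-Fibonacci identities are routine once formulated, but organising them so that the scalar and bicomplex-vector parts land in the asymmetric form displayed in (3.11) requires some careful bookkeeping of the factor $k$ coming from the generalised recurrence.
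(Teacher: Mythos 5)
Your proof is correct. The paper proves the three identities by brute-force expansion of the quaternion product over the basis $\{1,i,j,ij\}$, collecting the four components, and then invoking the same three k-Fibonacci facts you cite ($F_{k,n}^2+F_{k,n+1}^2=F_{k,2n+1}$, the convolution $F_{k,n}F_{k,m-1}+F_{k,n+1}F_{k,m}=F_{k,n+m}$, and $F_{k,n}F_{k,n+3}-F_{k,n+1}F_{k,n+2}=(-1)^{n+1}k$). Your route through the splitting ${Q_F}_{k,n}=A+jB$ with $A,B\in\mathbb{C}(i)$ is organisationally different and genuinely cleaner: since $j$ is central with $j^2=-1$, the three conjugations become $\bar A+j\bar B$, $A-jB$, $\bar A-j\bar B$, and the products collapse to $|A|^2-|B|^2+2j\,\Re(A\bar B)$, $A^2+B^2$, and $|A|^2+|B|^2-2ij\,\Im(A\bar B)$ in one line each, which both explains why $*_1$ and $*_3$ produce "norm-like" real parts and why $*_2$ does not. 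You also supply an actual derivation of $F_{k,n+2}F_{k,n+3}-F_{k,n}F_{k,n+1}=k\,F_{k,2n+3}$ (via two applications of the recurrence plus the shifted sum-of-squares identity), a step the paper's proof of \eqref{G11} silently asserts in passing from $2i(F_{k,n}F_{k,n+1}+F_{k,n+2}F_{k,n+3})$ to $2i(2F_{k,n}F_{k,n+1}+kF_{k,2n+3})$; what the paper's direct expansion buys in exchange is only that it requires no structural observation about $\mathbb{C}_2$ at all.
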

\begin{proof}

(\ref{G10}): By the Eq.(3.2) and (3.7) we get,
\begin{equation*}
\begin{array}{rl}
{Q_F}_{k,n}\,({Q_F}_{k,n})^{*_1}=&({F}_{k,n}^2+{{F}_{k,n+1}^2}-{F}_{k,n+2}^2-{F}_{k,n+3}^2) \\
&+2\,j\,(\,{F}_{k,n}\,{F}_{k,n+2}+{F}_{k,n+1}\,{F}_{k,n+3}) \\
=&{F}_{k,2n+1}-{F}_{k,2n+5}+2\,j\,{F}_{k,2n+3}. 
\end{array}
\end{equation*}
where ${F}_{k,n}^2+{{F}_{k,n+1}^2}={F}_{k,2n+1}$ and ${F}_{k,n}\,{F}_{k,m-1}+{F}_{k,n+1}\,{F}_{k,m}={F}_{k,n+m}$ are used \cite{4}.\\
\\
(\ref{G11}): By the Eq.(3.2) and (3.8) we get,
\begin{equation*}
\begin{array}{rl}
{Q_F}_{k,n}\,({Q_F}_{k,n})^{*_2}=&({F}_{k,n}^2-{F}_{k,n+1}^2+{F}_{k,n+2}^2-{F}_{k,n+3}^2) \\
&+2\,i\,({F}_{k,n}\,{F}_{k,n+1}+{F}_{k,n+2}\,{F}_{k,n+3}) \\
=&({F}_{k,n}^2-{F}_{k,n+1}^2+{F}_{k,n+2}^2-{F}_{k,n+3}^2) \\
&+2\,i\,(2\,{F}_{k,n}\,{F}_{k,n+1}+k\,{F}_{k,2n+3}). 
\end{array}
\end{equation*}
(\ref{G12}): By the Eq.(3.2) and (3.9) we get,
\begin{equation*}
\begin{array}{rl}
{Q_F}_{k,n}\,({Q_F}_{k,n})^{*_3}=&({F}_{k,n}^2+{F}_{k,n+1}^2+{F}_{k,n+2}^2+{F}_{k,n+3}^2) \\
&+2\,i\,j\,({F}_{k,n}\,{F}_{k,n+3}-{F}_{k,n+1}\,{F}_{k,n+2}) \\
=&{F}_{k,2n+1}+{F}_{k,2n+5}+2\,i\,j\,(-1)^{n+1}\,k. 
\end{array}
\end{equation*}
where ${F}_{k,n}^2-{F}_{k,n-1}\,{F}_{k,n+1}=(-1)^{n+1}$  and ${F}_{k,n}\,{F}_{k,n+3}-{F}_{k,n+1}\,{F}_{k,n+2}=k\,(-1)^{n+1}$ are used \cite{4}. \\
\end{proof} 

Therefore, the norm of the bicomplex k-Fibonacci quaternion ${\,{Q_F}_{k,n}}$ is defined in three different ways as follows
\begin{equation}\label{G13}
\begin{array}{rl}
{N}_({Q_F}_{k,n})^{*_1}=&\|{{Q}_F}_{k,n}\times\,({{Q}_F}_{k,n})^{*_1}\|^2 \\
=&|({F}_{k,n}^2+{F}_{k,n+1}^2)-({F}_{k,n+2}^2+{F}_{k,n+3}^2) \\
&+2\,j\,(\,{F}_{k,n}\,{F}_{k,n+2}+{F}_{k,n+1}\,{F}_{k,n+3})\,| \\
=&|{F}_{k,2n+1}-{F}_{k,2n+5}+2\,j\,{F}_{k,2n+3}|, 
\end{array} 
\end{equation}
\begin{equation}\label{G14}
{\begin{array}{rl}
{N}_({Q_F}_{k,n})^{*_2}=&\|{{Q}_F}_{k,n}\times\,({{Q}_F}_{k,n})^{*_2}\|^2 \\
=&|({F}_{k,n}^2-{F}_{k,n+1}^2)+({F}_{k,n+2}^2-{F}_{k,n+3}^2) \\
&+2\,i\,{F}_{k,n}\,{F}_{k,n+1}+k\,{F}_{k,2n+3}\,|,
\end{array}} 
\end{equation}
\begin{equation}\label{G15}
{\begin{array}{rl}
{N}_({Q_F}_{k,n})^{*_3}=&\|{{Q}_F}_{k,n}\times\,({{Q}_F}_{k,n})^{*_3}\|^2 \\
=&|({F}_{k,n}^2+{F}_{k,n+1}^2)+({F}_{k,n+2}^2+{F}_{k,n+3}^2) \\
&+2\,i\,j\,({F}_{k,n}\,{F}_{k,n+3}-{F}_{k,n+1}\,{F}_{k,n+2})\,| \\
=&|{F}_{k,2n+1}+{F}_{k,2n+5}+2\,i\,j\,(-1)^{n+1}\,k\,|.
\end{array}}  
\end{equation}

In the following theorem, some properties related to the bicomplex k-Fibonacci quaternions are given. 
\begin{thm} 
Let ${\,{Q_F}_{k,n}}$  be the bicomplex k-Fibonacci quaternion. In this case, we can give the following relations: 
\begin{equation}\label{G16}
{Q_F}_{k,n}+k\,{Q_F}_{k,n+1}={Q_F}_{k,n+2},
\end{equation} \,
\begin{equation}\label{G17}
\begin{array}{rl}
({Q_F}_{k,n})^2=& ({F}_{k,n}^2-{F}_{k,n+1}^2-{F}_{k,n+2}^2-{F}_{k,n+3}^2) \\
&+2\,i\,({F}_{k,n}\,{F}_{k,n+1}-{F}_{k,n+2}\,{F}_{k,n+3}) \\
&+2\,j\,({F}_{k,n}\,{F}_{k,n+2}-{F}_{k,n+1}\,{F}_{k,n+3}) \\
\quad \quad \quad \quad &+2\,i\,j\,({F}_{k,n}\,{F}_{k,n+3}+{F}_{k,n+1}\,{F}_{k,n+2}), \\
\end{array}
\end{equation} \,
\begin{equation}\label{G18}
\begin{array}{rl}
({Q_F}_{k,n})^2+({Q_F}_{k,n+1})^2=& {Q_F}_{k,2n+1}+(k\,{F}_{k,2n+6}-{F}_{k,2n+3}) \\
&+i\,({F}_{k,2n+2}-2\,{F}_{k,2n+6}) \\
&+j\,({F}_{k,2n+3}-2\,{F}_{k,2n+5})+i\,j\,(3\,F_{k,2n+4}),
\end{array}
\end{equation}
\begin{equation}\label{G19}
\begin{array}{rl}
({Q_F}_{k,n+1})^2-({Q_F}_{k,n-1})^2=& k\,[\,{Q_F}_{k,2n}-{F}_{k,2n+2}+k\,{F}_{k,2n+5} \\
&+i\,({F}_{k,2n+1}-2\,{F}_{k,2n+5}) \\
&+j\,(-{F}_{k,2n+2}-2\,k\,{F}_{k,2n+3}) \\
&+i\,j\,(\,3\,{F}_{k,2n+3})\,],
\end{array}
\end{equation} \,
\begin{equation}\label{G20}
\begin{array}{rl}
{Q_F}_{k,n}-i\,{Q_F}_{k,n+1}+j\,{{Q}_F}_{k,n+2}-i\,j\,{{Q}_F}_{k,n+3}&={F}_{k,n}+{F}_{k,n+2}-{F}_{k,n+4} \\
&-{F}_{k,n+6}+2\,j\,{L}_{k,n+3}.
\end{array}
\end{equation}
\begin{equation}\label{G21}
\begin{array}{rl}
{Q_F}_{k,n}-i\,{Q_F}_{k,n+1}-j\,{{Q}_F}_{k,n+2}-i\,j\,{{Q}_F}_{k,n+3}&={F}_{k,n}+{F}_{k,n+2}+{F}_{k,n+4} \\
&-{F}_{k,n+6}+2\,i\,{F}_{k,n+5} \\
&+2\,j\,{F}_{k,n+4}-2\,i\,j\,{F}_{k,n+3} \\
=&{L}_{k,n+1}-k\,{F}_{k,n+5}+2\,i\,{F}_{k,n+5} \\
&+2\,j\,{F}_{k,n+4}-2\,i\,j\,{F}_{k,n+3}.
\end{array}
\end{equation}
\end{thm}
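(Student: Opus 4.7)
The plan is to treat the five identities separately, since each one requires a different manipulation, but all reduce to componentwise manipulations of the k-Fibonacci recurrence plus the multiplication rules for the basis $\{1,i,j,ij\}$ displayed in Table~1.

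For (\ref{G16}), I would simply substitute the definitions (\ref{G2}) of ${Q_F}_{k,n}$ and ${Q_F}_{k,n+1}$, collect the basis components, and apply the k-Fibonacci recurrence $F_{k,n+2}=k\,F_{k,n+1}+F_{k,n}$ from (\ref{E1}) coordinate by coordinate. The identity then falls out immediately because the recurrence is $\mathbb{R}$-linear and each of the four coordinates is a shift of the preceding one. For (\ref{G17}), I would apply the multiplication formula (\ref{G5}) with $m=n$; the scalar, $i$-, $j$- and $ij$-parts are read off directly, and nothing but bookkeeping is involved.

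For (\ref{G18}) and (\ref{G19}), the strategy is to invoke (\ref{G17}) twice (with indices $n,n+1$ for (\ref{G18}) and $n-1,n+1$ for (\ref{G19})), add or subtract the expansions componentwise, and then compress the resulting quadratic expressions in $F_{k,\cdot}$ by means of the standard k-Fibonacci identities $F_{k,m}^{2}+F_{k,m+1}^{2}=F_{k,2m+1}$ and $F_{k,m}F_{k,m+1}+F_{k,m+1}F_{k,m+2}=F_{k,2m+2}$ from \cite{4} (already invoked in the proof of Theorem~3.1). The goal is to recognize the combination $F_{k,2n+1}+i\,F_{k,2n+2}+j\,F_{k,2n+3}+ij\,F_{k,2n+4}={Q_F}_{k,2n+1}$ on the right-hand side and then absorb the leftover terms into the correction expression displayed in the statement. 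This recognition step is the main obstacle: the correct identity to match each quadratic has to be chosen carefully, and the $k$-parameter produces small discrepancies (e.g.\ the term $k\,F_{k,2n+6}-F_{k,2n+3}$ in (\ref{G18})) that must be tracked by repeatedly using $F_{k,n+2}=k\,F_{k,n+1}+F_{k,n}$ to rewrite sums like $F_{k,2n+2}+F_{k,2n+4}$ in the desired mixed form.

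For (\ref{G20}) and (\ref{G21}), the computation is linear rather than quadratic: I would multiply each ${Q_F}_{k,n+\ell}$ by the prescribed basis element using Table~1, gather the four components, and simplify the resulting combinations $F_{k,n+\ell}\pm F_{k,n+\ell'}$ via the Lucas-type identity $F_{k,m-1}+F_{k,m+1}=L_{k,m}$ and the recurrence. The final reduction to $L_{k,n+1}-k\,F_{k,n+5}$ in (\ref{G21}) is obtained by one further application of the recurrence.

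Overall I expect the delicate part to be the proof of (\ref{G18}) and (\ref{G19}): not the conceptual content, which is a mechanical application of (\ref{G17}) and the k-Fibonacci identities, but the choice of the correct rewrite so that the combination on the right-hand side factors through ${Q_F}_{k,2n+1}$ or ${Q_F}_{k,2n}$ exactly as claimed, with the listed residual error terms.
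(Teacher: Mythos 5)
Your plan coincides with the paper's own proof: (\ref{G16}) by the recurrence applied coordinatewise, (\ref{G17}) by reading off the product formula with $m=n$, (\ref{G18})--(\ref{G19}) by adding/subtracting two such expansions and compressing via the convolution identity $F_{k,n}F_{k,m}+F_{k,n+1}F_{k,m+1}=F_{k,n+m+1}$, and (\ref{G20})--(\ref{G21}) by linear bookkeeping with $F_{k,m-1}+F_{k,m+1}=L_{k,m}$. One caveat: your direct read-off of (\ref{G5}) at $m=n$ yields $+F_{k,n+3}^2$ in the scalar part (since $(ij)^2=1$), which is what the paper's own computation displays, so the $-F_{k,n+3}^2$ in the printed statement of (\ref{G17}) is a sign typo you would correctly not reproduce.
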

\begin{proof} 
(\ref{G16}): By the Eq.(3.2) we get,
\begin{equation*}
\begin{array}{rl}
{Q_F}_{k,n}+k\,{Q_F}_{k,n+1}=&({F}_{k,n}+k\,{F}_{k,n+1})+i\,({F}_{k,n+1}+k\,{F}_{k,n+2}) \\
&+j\,({F}_{k,n+2}+k\,F_{k,n+3})+i\,j\,({F}_{k,n+3}+k\,{F}_{k,n+4}) \\
=&{F}_{k,n+2}+i\,{F}_{k,n+3}+j\,{F}_{k,n+4}+i\,j\,{F}_{k,n+5} \\
=&{Q_F}_{k,n+2}. 
\end{array}
\end{equation*}
(\ref{G17}): By the Eq.(3.2) we get,
\begin{equation*}
\begin{array}{rl}
({Q_F}_{k,n})^2=& ({F}_{k,n}^2-{F}_{k,n+1}^2-{F}_{k,n+2}^2+{F}_{k,n+3}^2) \\
&+2\,i\,({F}_{k,n}\,{F}_{k,n+1}-{F}_{k,n+2}\,{F}_{k,n+3}) \\
&+2\,j\,({F}_{k,n}\,{F}_{k,n+2}-{F}_{k,n+1}\,{F}_{k,n+3}) \\
\quad \quad \quad \quad &+2\,i\,j\,({F}_{k,n}\,{F}_{k,n+3}+{F}_{k,n+1}\,{F}_{k,n+2}). \\
\end{array}
\end{equation*}
(\ref{G18}): By the Eq.(3.2) we get,
\begin{equation*}
\begin{array}{rl}
({Q_F}_{k,n})^2+({{Q}_F}_{k,n+1})^2=&({F}_{k,2n+1}-{F}_{k,2n+3}-{F}_{k,2n+5}+{F}_{k,2n+7}) \\
&+2\,i\,({F}_{k,2n+2}-{F}_{k,2n+6}) \\ 
&+2\,j\,({F}_{k,2n+3}-{F}_{k,2n+5}) \\
&+2\,i\,j\,(2\,{F}_{k,2n+4}) \\ 
=& ({F}_{k,2n+1}+i\,{F}_{k,2n+2}+j\,{F}_{k,2n+3}+i\,j\,{F}_{k,2n+4}) \\
&-{F}_{k,2n+3}-{F}_{k,2n+5}+{F}_{k,2n+7} \\
&+i\,({F}_{k,2n+2}-2\,{F}_{k,2n+6})+j\,({F}_{k,2n+3}-2\,{F}_{k,2n+5}) \\
&+i\,j\,(3\,F_{k,2n+4}) \\
=&{Q_F}_{k,2n+1}+(k\,{F}_{k,2n+6}-{F}_{k,2n+3}) \\
&+i\,({F}_{k,2n+2}-2\,{F}_{k,2n+6})+j\,({F}_{k,2n+3}-2\,{F}_{k,2n+5}) \\
&+i\,j\,(3\,F_{k,2n+4}). 
\end{array}
\end{equation*}
(\ref{G19}): By the Eq.(3.2) we get,
\begin{equation*}
\begin{array}{rl}
({Q_F}_{k,n+1})^2-({{Q}_F}_{k,n-1})^2=& [\,({F}_{k,n+1}^2-{F}_{k,n-1}^2)-({F}_{k,n+2}^2-{F}_{k,n}^2) \\
&\quad \quad-({F}_{k,n+3}^2-{F}_{k,n+1}^2)+({F}_{k,n+4}^2-{F}_{k,n+2}^2)\,] \\
&+2\,i\,[\,({F}_{k,n+1}\,{F}_{k,n+2}-{F}_{k,n-1}\,{F}_{k,n}) \\
&\quad \quad-({F}_{k,n+3}\,{F}_{k,n+4}-{F}_{k,n+1}\,{F}_{k,n+2})\,] \\ 
&+2\,j\,[\,({F}_{k,n+1}\,{F}_{k,n+3}-{F}_{k,n-1}\,{F}_{k,n+1}) \\
&\quad \quad-({F}_{k,n+2}\,{F}_{k,n+4}-{F}_{k,n}\,{F}_{k,n+2})\,] \\
&+2\,i\,j\,[\,({F}_{k,n+1}\,{F}_{k,n+4}-{F}_{k,n-1}\,{F}_{k,n+2}) \\
&\quad \quad+({F}_{k,n+2}\,{F}_{k,n+3}-{F}_{k,n}\,{F}_{k,n+1})\,] \\ 
=& k\,(\,{F}_{k,2n}-k\,{F}_{k,2n+2}-k\,{F}_{k,2n+4}+k\,{F}_{k,2n+6}) \\
&+2\,i\,(k\,{F}_{k,2n+1}-k\,{F}_{k,2n+5})+2\,j\,(-k^2\,{F}_{k,2n+3}) \\
&+2\,i\,j\,(2\,k\,{F}_{k,2n+3}) \\
=& k\,[\,{Q_F}_{k,2n}-{F}_{k,2n+2}+k\,{F}_{k,2n+5} \\
&+i\,({F}_{k,2n+1}-2\,{F}_{k,2n+5}) \\
&+j\,(-{F}_{k,2n+2}-2\,k\,{F}_{k,2n+3}) \\
&+i\,j\,(\,3\,{F}_{k,2n+3})\,] . 
\end{array}
\end{equation*}
(\ref{G20}): By the Eq.(3.2) we get,
\begin{equation*}
\begin{array}{rl}
{Q_F}_{k,n}-i\,{Q_F}_{k,n+1}-j\,{Q_F}_{k,n+2}-i\,j\,{Q_F}_{k,n+3}&=({F}_{k,n}+{F}_{k,n+2}-k\,{F}_{k,n+5}) \\
&+2\,i\,{F}_{k,n+5}+2\,j\,{F}_{k,n+4} \\
&-2\,i\,j\,{F}_{k,n+3} \\
=&{L}_{k,n+1}-k\,{F}_{k,n+5}+2\,i\,{F}_{k,n+5} \\
&+2\,j\,{F}_{k,n+4}-2\,i\,j\,2\,j\,{F}_{k,n+3}.\,
\end{array}
\end{equation*}
(\ref{G21}): By the Eq.(3.2) we get,
\begin{equation*}
\begin{array}{rl}
{Q_F}_{k,n}-i\,{Q_F}_{k,n+1}+j\,{Q_F}_{k,n+2}-i\,j\,{Q_F}_{k,n+3}&=({F}_{k,n}+{F}_{k,n+2}-{F}_{k,n+4} \\
&-{F}_{k,n+6})+2\,j\,({F}_{k,n+2}+{F}_{k,n+4}) \\
=&{L}_{k,n+1}-({F}_{k,n+4}+{F}_{k,n+6}) \\
&+2\,j\,{L}_{k,n+3}.\,
\end{array}
\end{equation*}
\end{proof}
\begin{thm} Let ${Q_F}_{k,n}=({F}_{k,n},{F}_{k,n+1},{F}_{k,n+2},{F}_{k,n+3})$ and \\
${Q_L}_{k,n}=({L}_{k,n},{L}_{k,n+1},{L}_{k,n+2},{L}_{k,n+3})$ be the bicomplex k-Fibonacci quaternion and the bicomplex k-Lucas quaternion respectively. The following relations are satisfied
\begin{equation}\label{G22}
\begin{aligned}
{Q_F}_{k,n+1}+{Q_F}_{k,n-1}={L}_{k,n}+i{L}_{k,n+1}+j\,{L}_{k,n+2}+ij\,{L}_{k,n+3}={Q_L}_{k,n}, \\
\end{aligned}
\end{equation}
\begin{equation}\label{G23}
\begin{aligned}
{Q_F}_{k,n+2}-{Q_F}_{k,n-2}={L}_{k,n}+i{L}_{k,n+1} +j\,{L}_{k,n+2}+ij\,{L}_{k,n+3}=k\,{Q_L}_{k,n}.
\end{aligned}
\end{equation}
\end{thm}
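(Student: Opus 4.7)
The plan is to reduce both identities to componentwise applications of well-known k-Fibonacci/k-Lucas identities, since the bicomplex structure is additive and the basis $\{1,i,j,ij\}$ plays no role beyond bookkeeping in these two claims.

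For (3.22), I would start from the definition (3.2) and group the sum ${Q_F}_{k,n+1}+{Q_F}_{k,n-1}$ coordinate by coordinate, so that the $1$-component is $F_{k,n+1}+F_{k,n-1}$, the $i$-component is $F_{k,n+2}+F_{k,n}$, the $j$-component is $F_{k,n+3}+F_{k,n+1}$, and the $ij$-component is $F_{k,n+4}+F_{k,n+2}$. Then the standard identity $F_{k,m+1}+F_{k,m-1}=L_{k,m}$ (a direct consequence of the defining recurrence $F_{k,m+1}=kF_{k,m}+F_{k,m-1}$ together with the analogous one for $L_{k,m}$) collapses each coordinate into the corresponding k-Lucas number, yielding ${Q_L}_{k,n}$ by definition.

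For (3.23), I would apply the same coordinate-wise strategy to ${Q_F}_{k,n+2}-{Q_F}_{k,n-2}$. The relevant elementary identity is $F_{k,m+2}-F_{k,m-2}=k\,L_{k,m}$, which I would derive on the fly from the recurrence: $F_{k,m+2}=kF_{k,m+1}+F_{k,m}$ and $F_{k,m-2}=F_{k,m}-kF_{k,m-1}$, so that the difference equals $k(F_{k,m+1}+F_{k,m-1})=kL_{k,m}$. Applying this to each of the four coordinates gives $k\,{Q_L}_{k,n}$, matching the claimed right-hand side. (I note that the middle expression in (3.23) appears to contain a typographical slip, missing a factor of $k$ in front of each $L$; I would simply write the correct intermediate line.)

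The main obstacle is essentially bookkeeping: I would present both identities in a single display that lines up the four coordinates vertically, so the cancellations and the application of $F_{k,m+1}+F_{k,m-1}=L_{k,m}$ (resp.\ $F_{k,m+2}-F_{k,m-2}=kL_{k,m}$) are transparent without having to verify anything about the multiplication table, since only addition and scalar multiplication by $k$ are used. No subtleties from the bicomplex product or from the conjugations (3.7)--(3.9) enter the argument.
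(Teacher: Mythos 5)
Your proposal is correct and follows essentially the same route as the paper: expand coordinatewise from the definition and apply $F_{k,m+1}+F_{k,m-1}=L_{k,m}$ for (\ref{G22}) and $F_{k,m+2}-F_{k,m-2}=k\,L_{k,m}$ for (\ref{G23}). Your observation that the middle expression of (\ref{G23}) is missing the factor $k$ is also accurate; the paper's own proof silently writes the corrected line $k\,({L}_{k,n}+i\,{L}_{k,n+1}+j\,{L}_{k,n+2}+i\,j\,{L}_{k,n+3})$.
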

\begin{proof}
\begin{equation*}
\begin{aligned}
\begin{array}{rl}
{Q_F}_{k,n+1}+{Q_F}_{k,n-1}=&({F}_{k,n+1}+{F}_{k,n-1})+i\,({F}_{k,n+2}+{F}_{k,n}) \\
&+j\,({F}_{k,n+3}+{F}_{k,n+1})\\&+i\,j\,({F}_{k,n+4}+{F}_{k,n+2}) \\
=&({L}_{k,n}+i\,{L}_{k,n+1}+j\,{L}_{k,n+2}+i\,j\,{L}_{k,n+3}) \\
=&{Q_L}_{k,n}.
\end{array}
\end{aligned}
\end{equation*}
and
\begin{equation*}
\begin{array}{rl}
{Q_F}_{k,n+2}-{Q_F}_{k,n-2}=&({F}_{k,n+2}-{F}_{k,n-2})+i\,({F}_{k,n+3}-{F}_{k,n-1}) \\
&+j\,({F}_{k,n+4}-{F}_{k,n})+i\,j\,({F}_{k,n+5}-{F}_{k,n+1}) \\
=&k\,({L}_{k,n}+i\,{L}_{k,n+1}+j\,{L}_{k,n+2}+i\,j\,{L}_{k,n+3}) \\
=&k\,{Q_L}_{k,n}.
\end{array}
\end{equation*}
\end{proof} 
\begin{thm} 
For $n,m\ge0$ the Honsberger identity for  the bicomplex k-Fibonacci quaternions ${Q_F}_{k,n}$ and ${Q_F}_{k,m}$  is given by
\begin{equation}\label{G24}
\begin{array}{rl}
{Q_F}_{k,n}\,{Q_F}_{k,m}+{Q_F}_{k,n+1}\,{Q_F}_{k,m+1}=& {Q_F}_{k,n+m+1}-{F}_{k,n+m+3}+k\,{F}_{k,n+m+6}\\
&+i\,({F}_{k,n+m+2}-2\,{F}_{k,n+m+6}) \\
&+j\,({F}_{k,n+m+3}-2\,{F}_{k,n+m+5})\\
&+i\,j\,(3\,{F}_{k,n+m+4}).
\end{array}
\end{equation}
\end{thm}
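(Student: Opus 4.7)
The plan is to expand the left-hand side directly using the multiplication formula (\ref{G5}) for each of the two products, and then collapse the resulting Fibonacci bilinears by repeated application of the Honsberger-type identity $F_{k,n}F_{k,m} + F_{k,n+1}F_{k,m+1} = F_{k,n+m+1}$ (and its shifted variant $F_{k,n}F_{k,m-1} + F_{k,n+1}F_{k,m} = F_{k,n+m}$), which is attributed to Falcon--Plaza in \cite{4} and was already invoked in the proof of Theorem 3.1.

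First I would expand ${Q_F}_{k,n}{Q_F}_{k,m}$ and ${Q_F}_{k,n+1}{Q_F}_{k,m+1}$ using (\ref{G5}) and add them, sorting the result by the basis components $1,\,i,\,j,\,ij$. For the scalar component I obtain $F_{k,n}F_{k,m}-2F_{k,n+2}F_{k,m+2}+F_{k,n+4}F_{k,m+4}$ after cancellations; rewriting this as $[F_{k,n}F_{k,m}+F_{k,n+1}F_{k,m+1}]-[F_{k,n+1}F_{k,m+1}+F_{k,n+2}F_{k,m+2}]-[F_{k,n+2}F_{k,m+2}+F_{k,n+3}F_{k,m+3}]+[F_{k,n+3}F_{k,m+3}+F_{k,n+4}F_{k,m+4}]$ and applying the Honsberger identity term by term yields $F_{k,n+m+1}-F_{k,n+m+3}-F_{k,n+m+5}+F_{k,n+m+7}$, which collapses to $F_{k,n+m+1}-F_{k,n+m+3}+k\,F_{k,n+m+6}$ via the defining recurrence $F_{k,n+m+7}=k\,F_{k,n+m+6}+F_{k,n+m+5}$.

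I would then repeat the same procedure on the $i$-, $j$-, and $ij$-components. For each of them, pairing the eight resulting bilinears into four Honsberger pairs produces linear combinations of $F_{k,n+m+\ell}$ for appropriate $\ell$; for example, the $i$-component telescopes to $2F_{k,n+m+2}-2F_{k,n+m+6}$, which separates as $F_{k,n+m+2}+(F_{k,n+m+2}-2F_{k,n+m+6})$, exhibiting the $i$-part of ${Q_F}_{k,n+m+1}$ plus the extra $i$-coefficient on the right-hand side. The analogous pairings produce the $j$- and $ij$-components. Reassembling the four components splits off $F_{k,n+m+1}+i\,F_{k,n+m+2}+j\,F_{k,n+m+3}+ij\,F_{k,n+m+4}={Q_F}_{k,n+m+1}$ plus exactly the correction terms stated in (\ref{G24}).

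The main obstacle is purely bookkeeping: there are sixteen Fibonacci products on the left that must be regrouped into eight Honsberger pairs with correctly chosen indices, and the $i\,j$-component in particular requires the shifted form $F_{k,n}F_{k,m-1}+F_{k,n+1}F_{k,m}=F_{k,n+m}$ rather than the symmetric one. No genuinely new identity is needed beyond those already cited from \cite{4} and the basic recurrence (\ref{E1}).
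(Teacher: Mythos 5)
Your proposal is correct and follows essentially the same route as the paper: expand both products via the multiplication rule, regroup the sixteen bilinears componentwise into Honsberger pairs $F_{k,n}F_{k,m}+F_{k,n+1}F_{k,m+1}=F_{k,n+m+1}$, and then split off ${Q_F}_{k,n+m+1}$ using the recurrence. (The only inessential difference is your remark that the $ij$-component needs the shifted identity; the symmetric form with shifted indices already suffices there, as in the paper.)
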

\begin{proof}
(\ref{G24}): By the Eq.(3.2) we get,
\begin{equation*}
\begin{array}{rl}
{Q_F}_{k,n}\,{Q_F}_{k,m}+{Q_F}_{k,n+1}\,{Q_F}_{k,m+1}=&[\,({F}_{k,n}{F}_{k,m}+{F}_{k,n+1}{F}_{k,m+1}) \\
&\quad \quad-({F}_{k,n+1}{F}_{k,m+1}+{F}_{k,n+2}{F}_{k,m+2}) \\
&\quad \quad-({F}_{k,n+2}{F}_{k,m+2}+{F}_{k,n+3}{F}_{k,m+3}) \\
&\quad \quad+({F}_{k,n+3}{F}_{k,m+3}+{F}_{k,n+4}{F}_{k,m+4})\,] \\
&+i\,[\,({F}_{k,n}{F}_{k,m+1}+{F}_{k,n+1}{F}_{k,m+2}) \\
&\quad \quad+({F}_{k,n+1}{F}_{k,m}+{F}_{k,n+2}{F}_{k,m+1}) \\
&\quad \quad-({F}_{k,n+2}{F}_{k,m+3}+{F}_{k,n+3}{F}_{k,m+4}) \\
&\quad \quad-({F}_{k,n+3}{F}_{k,m+2}+{F}_{k,n+4}{F}_{k,m+3})\,] \\
&+j\,[\,({F}_{k,n}{F}_{k,m+2}+{F}_{k,n+1}{F}_{k,m+3}) \\
&\quad \quad+({F}_{k,n+2}{F}_{k,m}+{F}_{k,n+3}{F}_{k,m+1}) \\
&\quad \quad-({F}_{k,n+1}{F}_{k,m+3}+{F}_{k,n+2}{F}_{k,m+4}) \\
&\quad \quad-({F}_{k,n+3}{F}_{k,m+1}+{F}_{k,n+4}{F}_{k,m+2})\,] \\
&+i\,j\,[\,({F}_{k,n}{F}_{k,m+3}+{F}_{k,n+1}{F}_{k,m+4}) \\
&\quad \quad+({F}_{k,n+1}{F}_{k,m+2}+{F}_{k,n+2}{F}_{k,m+3}) \\
&\quad \quad+({F}_{k,n+2}{F}_{k,m+1}+{F}_{k,n+3}{F}_{k,m+2}) \\
&\quad \quad+({F}_{k,n+3}{F}_{k,m}+{F}_{k,n+4}{F}_{k,m+1})\,] \\
=&({F}_{k,n+m+1}-{F}_{k,n+m+3}-{F}_{k,n+m+5} \\
&+{F}_{k,n+m+7})+2\,i\,({F}_{k,n+m+2}-{F}_{k,n+m+6}) \\
&+2\,j\,({F}_{k,n+m+3}-{F}_{k,n+m+5}) \\
&+i\,j\,(4\,{F}_{k,n+m+4}) \\
=&({F}_{k,n+m+1}+\,i\,{F}_{k,n+m+2}+j\,{F}_{k,n+m+3} \\
&+\,i\,j\,{F}_{k,n+m+4})-{F}_{k,n+m+3}+k\,{F}_{k,n+m+6} \\
&+i\,({F}_{k,n+m+2}-2\,{F}_{k,n+m+6}) \\
&+j\,({F}_{k,n+m+3}-2\,{F}_{k,n+m+5}) \\
&+i\,j\,(3\,{F}_{k,n+m+4}) \\
=&{Q_F}_{k,n+m+1}-{F}_{k,n+m+3}+k\,{F}_{k,n+m+6} \\
&+i\,({F}_{k,n+m+2}-2\,{F}_{k,n+m+6}) \\
&+j\,({F}_{k,n+m+3}-2\,{F}_{k,n+m+3})\\
&+i\,j\,(3\,{F}_{k,n+m+4}).\,
\end{array}
\end{equation*}
where the identity ${F}_{k,n}{F}_{k,m}+{F}_{k,n+1}{F}_{k,m+1}={F}_{k,n+m+1}$ was used \cite{4}.  
\end{proof}
\begin{thm} 
For ${n,m\ge0}$ the D'Ocagne's identity for the bicomplex k-Fibonacci quaternions ${Q_F}_{k,n}$ and ${Q_F}_{k,m}$ is given by
\begin{equation}\label{G25}
\begin{array}{rl}
{Q_F}_{k,n}\,{Q_F}_{k,m+1}-{Q_F}_{k,n+1}\,{Q_F}_{k,m}&=(-1)^m\,{F}_{k,n-m}\,[\,2\,(k^2+2)\,j+(k^3+2\,k)\,i\,j\,]. 
\end{array}
\end{equation}
\end{thm}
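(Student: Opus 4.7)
The plan is to expand both products $Q_{F_{k,n}} Q_{F_{k,m+1}}$ and $Q_{F_{k,n+1}} Q_{F_{k,m}}$ using the multiplication formula (3.5), then collect terms in each of the four components $\{1,i,j,ij\}$. The key tool will be a one-parameter family of D'Ocagne-type identities for the k-Fibonacci numbers, all derivable from the basic identity
\begin{equation*}
F_{k,a}\,F_{k,b+1}-F_{k,a+1}\,F_{k,b}=(-1)^{b}\,F_{k,a-b}
\end{equation*}
by iterating the recurrence $F_{k,r+1}=k\,F_{k,r}+F_{k,r-1}$. Specifically, I would establish (or invoke from \cite{4}) the shifts $F_{k,a}F_{k,b+2}-F_{k,a+2}F_{k,b}=k(-1)^{b}F_{k,a-b}$, $F_{k,a}F_{k,b+3}-F_{k,a+3}F_{k,b}=(k^{2}+1)(-1)^{b}F_{k,a-b}$, and $F_{k,a}F_{k,b+4}-F_{k,a+4}F_{k,b}=(k^{3}+2k)(-1)^{b}F_{k,a-b}$. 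These are what will produce precisely the coefficients $k^{2}+2$ and $k^{3}+2k$ appearing in the right-hand side.

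Next I would show that the scalar component and the $i$-component vanish. For the scalar part, the four differences $F_{k,n+r}F_{k,m+r+1}-F_{k,n+r+1}F_{k,m+r}$ for $r=0,1,2,3$ all equal $(-1)^{m+r}F_{k,n-m}$, and once weighted by the signs $+,-,-,+$ coming from (3.5), they sum to zero. For the $i$-component, after cancelling the two repeated cross terms $F_{k,n+1}F_{k,m+1}$ and $F_{k,n+3}F_{k,m+3}$, one is left with $(F_{k,n}F_{k,m+2}-F_{k,n+2}F_{k,m})-(F_{k,n+2}F_{k,m+4}-F_{k,n+4}F_{k,m+2})$, and by the shift-$2$ identity above both pieces equal $k(-1)^{m}F_{k,n-m}$, giving $0$.

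The interesting work is in the $j$-component and the $ij$-component. For the $j$-component, after cancellation one is left with four differences that pair up as one shift-$3$, one shift-$1$, another shift-$3$, and one shift-$1$, which by the identities above sum to $[(k^{2}+1)+1+(k^{2}+1)+1](-1)^{m}F_{k,n-m}=2(k^{2}+2)(-1)^{m}F_{k,n-m}$. For the $ij$-component, the three middle cross terms $F_{k,n+1}F_{k,m+3}$, $F_{k,n+2}F_{k,m+2}$, $F_{k,n+3}F_{k,m+1}$ cancel outright, leaving the single difference $F_{k,n}F_{k,m+4}-F_{k,n+4}F_{k,m}$, which by the shift-$4$ identity equals $(k^{3}+2k)(-1)^{m}F_{k,n-m}$. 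Combining these yields the stated formula.

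The main obstacle is bookkeeping: the multiplication in (3.5) is a $4\times 4$ expansion of each product, so each component of the difference is an eight-term expression whose partial cancellations must be tracked carefully, and the sign pattern on $(-1)^{m+r}$ must be matched against the $\pm$ signs that (3.5) introduces in each slot. Deriving the shift-$3$ and shift-$4$ D'Ocagne identities by one line of recurrence apiece is what makes the final coefficients $k^{2}+2$ and $k^{3}+2k$ drop out naturally rather than having to be engineered; once those lemmas are in hand, the rest is mechanical.
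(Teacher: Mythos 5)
Your proposal is correct and follows essentially the same route as the paper: expand both products componentwise via the multiplication rule and reduce each slot using D'Ocagne-type identities, so that the scalar and $i$ parts cancel and the $j$ and $ij$ parts produce $2(k^2+2)$ and $k^3+2k$. In fact you are more explicit than the paper, which only cites the basic identity ${F}_{k,m}{F}_{k,n+1}-{F}_{k,m+1}{F}_{k,n}=(-1)^{n}{F}_{k,m-n}$ and asserts the result, whereas your shift-$2$, shift-$3$ and shift-$4$ lemmas supply the missing justification for the final coefficients.
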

\begin{proof} 
(\ref{G25}): By the Eq.(3.2) we get,
\begin{equation*}
\begin{array}{rl}
{Q_F}_{k,n}\,{Q_F}_{k,m+1}-{Q_F}_{k,n+1}\,{Q_F}_{k,m}=&[\,({F}_{k,n}{F}_{k,m+1}-{F}_{k,n+1}{F}_{k,m}) \\
&\quad-({F}_{k,n+1}{F}_{k,m+2}-{F}_{k,n+2}{F}_{k,m+1}) \\
&\quad-({F}_{k,n+2}{F}_{k,m+3}-{F}_{k,n+3}{F}_{k,m+2}) \\
&\quad+({F}_{k,n+3}{F}_{k,m+4}-{F}_{k,n+4}{F}_{k,m+3})\,] \\
&+\,i\,[\,({F}_{k,n}{F}_{k,m+2}-{F}_{k,n+1}{F}_{k,m+1})\\
&\quad+({F}_{k,n+1}{F}_{k,m+1}-{F}_{k,n+2}{F}_{k,m}) \\
&\quad-({F}_{k,n+3}{F}_{k,m+3}-{F}_{k,n+4}{F}_{k,m+2})\,] \\
&+\,j\,[\,({F}_{k,n}{F}_{k,m+3}-{F}_{k,n+1}{F}_{k,m+2})\\
&\quad+({F}_{k,n+2}{F}_{k,m+1}-{F}_{k,n+3}{F}_{k,m}) \\
&\quad-({F}_{k,n+1}{F}_{k,m+4}-{F}_{k,n+2}{F}_{k,m+3})\\
&\quad-({F}_{k,n+3}{F}_{k,m+2}-{F}_{k,n+4}{F}_{k,m+1})\,] \\
&+\,i\,j\,[\,({F}_{k,n}{F}_{k,m+4}-{F}_{k,n+1}{F}_{k,m+3})\\
&\quad+({F}_{k,n+1}{F}_{k,m+3}-{F}_{k,n+2}{F}_{k,m+2}) \\
&\quad+({F}_{k,n+2}{F}_{k,m+2}-{F}_{k,n+3}{F}_{k,m+1})\\
&\quad+({F}_{k,n+3}{F}_{k,m+1}{F}_{k,n+4}{F}_{k,m})\,] \\
=&(-1)^m\,{F}_{k,n-m}\,[\,2\,(k^2+2)\,j \\
&\quad+(k^3+2\,k)\,i\,j\,].\,
\end{array}
\end{equation*}
where the identity ${F}_{k,m}{F}_{k,n+1}-{F}_{k,m+1}{F}_{k,n}=(-1)^{n}{F}_{k,m-n}$  is used \cite{5}.
\end{proof}
\begin{thm} 
Let ${Q_F}_{k,n}$ be the bicomplex k-Fibonacci quaternion.Then, we have the following identities
\begin{equation}\label{G26}
\sum\limits_{s=1}^{n}{\,{Q_F}_{k,s}}=\frac{1}{k}\,(\,{Q_F}_{k,n+1}+{Q_F}_{k,n}-{Q_F}_{k,1}-{Q_F}_{k,0}\,),
\end{equation}
\begin{equation}\label{G27}
\sum\limits_{s=1}^{n}{\,{Q_F}_{k,2s-1}}=\frac{1}{k}({Q_F}_{k,2n}-{Q_F}_{k,0}),
\end{equation}
\begin{equation}\label{G28}
\sum\limits_{s=1}^{n}{\,{Q_F}_{2s}}=\frac{1}{k}({Q_F}_{k,2n+1}-{Q_F}_{k,1}).
\end{equation} 
\end{thm}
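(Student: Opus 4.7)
The plan is to reduce each of the three sum identities to the corresponding scalar identity for k-Fibonacci numbers and then reassemble the four shifted scalar sums into a bicomplex k-Fibonacci quaternion. The engine is a standard telescoping trick: from the defining recurrence $F_{k,n+1}=kF_{k,n}+F_{k,n-1}$ we obtain $kF_{k,n}=F_{k,n+1}-F_{k,n-1}$, so summing over $s=1,\ldots,n$ telescopes to $k\sum_{s=1}^{n}F_{k,s}=F_{k,n+1}+F_{k,n}-F_{k,1}-F_{k,0}$. The analogous step-two variants $kF_{k,2s-1}=F_{k,2s}-F_{k,2s-2}$ and $kF_{k,2s}=F_{k,2s+1}-F_{k,2s-1}$ yield $k\sum_{s=1}^{n}F_{k,2s-1}=F_{k,2n}-F_{k,0}$ and $k\sum_{s=1}^{n}F_{k,2s}=F_{k,2n+1}-F_{k,1}$.

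For (\ref{G26}), I would expand ${Q_F}_{k,s}$ via (\ref{G2}) and split the sum across the basis $\{1,i,j,ij\}$, thereby reducing $\sum_{s=1}^{n}{Q_F}_{k,s}$ to four scalar sums $\sum_{s=1}^{n}F_{k,s+r}$ for $r=0,1,2,3$. Each shifted sum is handled by the first telescoping identity, giving $k\sum_{s=1}^{n}F_{k,s+r}=F_{k,n+1+r}+F_{k,n+r}-F_{k,1+r}-F_{k,r}$. Bundling the four scalar equalities back under $\{1,i,j,ij\}$ reassembles exactly $\tfrac{1}{k}({Q_F}_{k,n+1}+{Q_F}_{k,n}-{Q_F}_{k,1}-{Q_F}_{k,0})$.

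Identities (\ref{G27}) and (\ref{G28}) follow by the same blueprint using the step-two telescopings. Expanding ${Q_F}_{k,2s-1}$ componentwise produces four scalar sums $\sum_{s=1}^{n}F_{k,(2s-1)+r}$ for $r=0,1,2,3$, whose indices alternate between odd and even; each telescopes to $F_{k,2n+r}-F_{k,r}$, and the four boundary terms regroup into $\tfrac{1}{k}({Q_F}_{k,2n}-{Q_F}_{k,0})$. The even case (\ref{G28}) is identical with $2s$ in place of $2s-1$, producing $\tfrac{1}{k}({Q_F}_{k,2n+1}-{Q_F}_{k,1})$.

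The only real obstacle is indexing hygiene: I must verify that the four shifted boundary contributions arising from each telescoping line up precisely as the four components of the bicomplex k-Fibonacci quaternions on the right. No algebraic obstruction appears, because the scalar telescoping identities are uniform in the shift parameter $r$ and the bicomplex units $1,i,j,ij$ commute with scalars and so are simply carried along as constant coefficients under the finite sums.
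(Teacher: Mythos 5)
Your proposal is correct and follows essentially the same route as the paper: expand $\sum_s {Q_F}_{k,s}$ componentwise over the basis $\{1,i,j,ij\}$, apply the scalar summation formulas $k\sum_{s=1}^{n}F_{k,s+r}=F_{k,n+1+r}+F_{k,n+r}-F_{k,1+r}-F_{k,r}$ (and their odd/even-index analogues), and regroup the boundary terms into the quaternions on the right. The only difference is that you derive these scalar formulas by telescoping $kF_{k,m}=F_{k,m+1}-F_{k,m-1}$, whereas the paper simply cites them from Falc\'on and Plaza; your version is self-contained but otherwise identical in structure.
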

\begin{proof}
(\ref{G26}) Since $\sum\nolimits_{i=1}^{n}{{F}_{k,i}}=\frac{1}{k}({F}_{k,n+1}+{F}_{k,n}-1)$  \cite{4}, \, we get
\begin{equation*}
\begin{aligned}
  & \sum\limits_{s=1}^{n}\,{{Q_F}_{k,s}}=\sum\limits_{s=1}^{n}{{F}_{k,s}}+i\,\sum\limits_{s=1}^{n}{{F}_{k,s+1}}+\varepsilon\,\sum\limits_{s=1}^{n}{{F}_{k,s+2}}+i\,j\,\sum\limits_{s=1}^{n}{{F}_{k,s+3}} \\
& \quad  =\frac{1}{k}\{({F}_{k,n+1}+{F}_{k,n}-1)+i\,({F}_{k,n+2}+{F}_{k,n+1}-k-1) \\
& \quad \quad +j\,[\,{F}_{k,n+3}+{F}_{k,n+2}-(k^2+1)-k\,] \\
& \quad \quad +i\,j\,[\,{F}_{k,n+4}+{F}_{k,n+3}-(k^3+2\,k)-(k^2+1)\,]\} \\
 & \quad  =\frac{1}{k}\{({F}_{k,n+1}+i\,{F}_{k,n+2}+j\,{F}_{k,n+3}+i\,j\,{F}_{k,n+4}) \\
& \quad \quad +({F}_{k,n}+i\,{F}_{k,n+1}+j\,{F}_{k,n+2}+i\,j\,{F}_{k,n+3}) \\
& \quad \quad -[\,1+(k+1)\,i+(k^2+k+1)\,j+(k^3+k^2+2\,k+1)\,]\}\\
 & \quad =\frac{1}{k}\,(\,{Q_F}_{k,n+1}+{Q_F}_{k,n}-{Q_F}_{k,1}-{Q_F}_{k,0}\,).
\end{aligned}
\end{equation*}
(\ref{G27}): Using $\sum\limits_{i=1}^{n}{F}_{k,2i+1}=\frac{1}{k}{F}_{k,2n+2}$ \, and \, $\sum\limits_{i=1}^{n}{F}_{k,2i}=\frac{1}{k}({F}_{k,2n+1}-1)$ \,  \cite{4}, \, we get
\begin{equation*}
\begin{array}{rl}
  \sum\limits_{s=1}^{n}{\,{{Q}_F}_{k,2s-1}}= &\frac{1}{k}\,\{\,({F}_{k,2n})+i\,({F}_{k,2n+1}-1)+j\,({F}_{k,2n+2}-k) 
	\\ &+i\,j\,({F}_{k,2n+3}-(k^2+1)\,)\}  
	\\=&\frac{1}{k}\,\{[\,{F}_{k,2n}+i\,{F}_{k,2n+1}+j\,{F}_{k,2n+2}+i\,j\,{F}_{k,2n+3}]
  \\ &-\,(\,0+i+k\,j+2\,(k^2+1)\,i\,j\,)\}
	\\=&\frac{1}{k}\,\{{Q_F}_{k,2n}-[{F}_{k,0}+i\,{F}_{k,1}+\,j\,{F}_{k,2}+\,i\,j\,{F}_{k,3}]\}
  \\=&\frac{1}{k}\,(\,{Q_F}_{k,2n}-\,{Q_F}_{k,0}\,)\,.
\end{array}
\end{equation*}
(\ref{G28}): Using $\sum\limits_{i=1}^{n}{F}_{k,2i}=\frac{1}{k}({F}_{2n+1}-1)$ \, \cite{4}, \, we obtain
\begin{equation*}
\begin{array}{rl}
\sum\limits_{s=1}^{n}{\,{{Q}_F}_{k,2s}}= & \frac{1}{k}\,\{({F}_{k,2n+1}-1)+i\,({F}_{k,2n+2}-k)+j\,({F}_{k,2n+3}-(k^2+1))
\\& \quad \quad +i\,j\,({F}_{k,2n+4}-(k^2+2\,k))\}  
\\=&\frac{1}{k}\,\{({F}_{k,2n+1}+i\,{F}_{k,2n+2}+j\,{F}_{k,2n+3}+i\,j\,{F}_{k,2n+4})
\\& \quad \quad -\,(1+k\,i\,+(k^2+1)\,j\,+(k^3+2\,k)\,i\,j\,)\,\}
\\=&\frac{1}{k}\,\{\,{Q_F}_{k,2n+1}-({F}_{k,1}+i\,{F}_{k,2}+j\,{F}_{k,3}+i\,j\,{F}_{k,4})\}
  \\=&\frac{1}{k}\,(\,{Q_F}_{k,2n+1}-{Q_F}_{k,1}\,)\,. 
\end{array}
\end{equation*}
\end{proof}
\begin{thm} \textbf{Binet's Formula}
\\ 
Let ${\,{Q_F}_{k,n}}$ be the bicomplex k-Fibonacci quaternion. For $n\ge 1$, Binet's formula for these quaternions is as follows:
\begin{equation}\label{G29}
{Q_F}_{k,n}=\frac{1}{\alpha -\beta }\left( \,\hat{\alpha }\,{\alpha}^{n}-\hat{\beta \,}\,{\beta }^{n} \right)\,
\end{equation}
where   
\begin{equation*}
\begin{array}{l}
\hat{\alpha }=1+i\,{\alpha}+j\,{\alpha}^2+i\,j\,{\alpha}^3,\,\,\,\,\, \alpha=\frac{k+\sqrt{k^2+4}}{2},
\end{array}
\end{equation*}

\begin{equation*}
\begin{array}{l}
\hat{\beta }=1+i\,{\beta}+j\,{\beta}^2+i\,j\,{\beta}^3,\,\,\,\,\, \beta=\frac{k-\sqrt{k^2+4}}{2},
\end{array}
\end{equation*}
$\alpha +\beta =k\ ,\ \ \alpha -\beta =\sqrt{k^2+4}\,,\ \ \alpha \beta =-1$.\\
\end{thm}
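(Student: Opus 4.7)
The plan is to prove the Binet formula by direct substitution, exploiting the classical Binet formula for the ordinary $k$-Fibonacci numbers. Recall that $\alpha$ and $\beta$ are the roots of the characteristic polynomial $x^2 - kx - 1 = 0$ associated with the recurrence $F_{k,n+1} = k F_{k,n} + F_{k,n-1}$, and the standard Binet identity gives
\begin{equation*}
F_{k,n} \;=\; \frac{\alpha^n - \beta^n}{\alpha - \beta}, \qquad n \geq 0.
\end{equation*}

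Starting from the definition \eqref{G2}, I would substitute this expression into each of the four components of ${Q_F}_{k,n}$:
\begin{equation*}
{Q_F}_{k,n} \;=\; \frac{\alpha^n - \beta^n}{\alpha - \beta} + i\,\frac{\alpha^{n+1} - \beta^{n+1}}{\alpha - \beta} + j\,\frac{\alpha^{n+2} - \beta^{n+2}}{\alpha - \beta} + i\,j\,\frac{\alpha^{n+3} - \beta^{n+3}}{\alpha - \beta}.
\end{equation*}
Pulling out the common factor $\tfrac{1}{\alpha-\beta}$ and then grouping the terms proportional to $\alpha^n$ and $\beta^n$ respectively gives
\begin{equation*}
{Q_F}_{k,n} \;=\; \frac{1}{\alpha-\beta}\Bigl[\,\alpha^n\bigl(1 + i\,\alpha + j\,\alpha^2 + i\,j\,\alpha^3\bigr) \,-\, \beta^n\bigl(1 + i\,\beta + j\,\beta^2 + i\,j\,\beta^3\bigr)\Bigr],
\end{equation*}
which, by the definitions $\hat{\alpha} = 1 + i\,\alpha + j\,\alpha^2 + i\,j\,\alpha^3$ and $\hat{\beta} = 1 + i\,\beta + j\,\beta^2 + i\,j\,\beta^3$, is exactly the claimed formula \eqref{G29}.

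The argument is essentially mechanical; there is no real obstacle once one observes that the bicomplex units $1, i, j, i\,j$ commute with the scalars $\alpha, \beta$ and with each other. The only point requiring mild care is verifying that the factorization $\alpha^n \cdot \alpha^s = \alpha^{n+s}$ for $s = 0,1,2,3$ genuinely lets us pull out a single $\alpha^n$ from each component; this is just associativity and commutativity of multiplication in the bicomplex algebra (see Table~\ref{table:nonlin}), so it goes through with no trouble. An alternative, essentially equivalent route would be to note that $\{{Q_F}_{k,n}\}$ inherits the second-order recurrence ${Q_F}_{k,n+1} = k\,{Q_F}_{k,n} + {Q_F}_{k,n-1}$ from \eqref{E1}, solve it via the characteristic equation $x^2 - kx - 1 = 0$, and fix the bicomplex coefficients $\hat{\alpha}, \hat{\beta}$ from the initial values ${Q_F}_{k,0}$ and ${Q_F}_{k,1}$; but the direct substitution is shorter and avoids having to invert a $2\times 2$ system with bicomplex entries.
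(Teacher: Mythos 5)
Your proposal is correct and follows essentially the same route as the paper: substitute the scalar Binet formula $F_{k,n}=(\alpha^n-\beta^n)/(\alpha-\beta)$ into each component of ${Q_F}_{k,n}$ and factor out $\alpha^n$ and $\beta^n$ to obtain $\hat{\alpha}$ and $\hat{\beta}$. No gaps.
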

\begin{proof}

By using the Binet formula for k-Fibonacci number \cite{5}, we obtain  
\begin{equation*}
\begin{array}{rl}
{Q_F}_{k,n}=&{F}_{k,n}+i\,{F}_{k,n+1}+j\,{F}_{k,n+2}+i\,j\,{F}_{k,n+3} \\
\\
=&\frac{\alpha^n -\beta^n }{\sqrt{k^2+4}}+i\,(\frac{\alpha^{n+1} -\beta^{n+1}\,}{\sqrt{k^2+4}})+j\,(\frac{\alpha^{n+2} -\beta^{n+2}\,}{\sqrt{k^2+4}})+i\,j\,(\frac{\alpha^{n+3} -\beta^{n+3}\,}{\sqrt{k^2+4}}) \\
\\
=&\frac{\alpha^{n}\,(1+i\,\alpha+j\,\alpha^2+i\,j\,\alpha^3)-\beta^{n}\,(1+i\,\beta+j\,\beta^2+i\,j\,\beta^3)\,}{\sqrt{k^2+4}} \\
\\
=&\frac{1}{\sqrt{k^2+4}}( \,\hat{\alpha }\,\alpha^{n}-\hat{\beta}\,\beta^{n}).
\end{array}
\end{equation*}
where $\hat{\alpha }=\,1+i\,\alpha +j\,\alpha^2+i\,j\,\alpha^3, \, \, \hat{\beta }=\,\,1+i\,\beta +j\,\beta^2+i\,j\,\beta^3$. 
\end{proof}
\begin{thm} \textbf{Cassini's Identity}
\\ 
Let ${\,{Q_F}_{k,n}}$ be the bicomplex k-Fibonacci quaternion. For $n\ge 1$, Cassini's identity for ${\,{Q_F}_{k,n}}$ is as follows: 
\begin{equation}\label{G30}
{Q_F}_{k,n-1}\,{Q_F}_{k,n+1}-({Q_F}_{k,n})^2=(-1)^{n}\,[\,2(k^2+2)\,j+\,(k^3+2\,k)\,i\,j\,]. 
\end{equation}
\end{thm}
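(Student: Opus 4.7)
The plan is to derive Cassini's identity from Binet's formula (Theorem~3.6), mirroring the classical derivation of the Cassini identity for $F_{k,n}$. Writing ${Q_F}_{k,m} = (\hat{\alpha}\alpha^m - \hat{\beta}\beta^m)/(\alpha-\beta)$ for each of $m = n-1, n, n+1$ and expanding the difference, the terms $\hat{\alpha}^2\alpha^{2n}$ and $\hat{\beta}^2\beta^{2n}$ cancel (using commutativity of the bicomplex product so that $\hat{\alpha}\hat{\beta} = \hat{\beta}\hat{\alpha}$), leaving
\[
{Q_F}_{k,n-1}\,{Q_F}_{k,n+1} - ({Q_F}_{k,n})^2 = \frac{\hat{\alpha}\hat{\beta}\,(\alpha\beta)^{n-1}}{(\alpha-\beta)^2}\bigl[\,2\alpha\beta - \alpha^2 - \beta^2\,\bigr] = -\hat{\alpha}\hat{\beta}\,(\alpha\beta)^{n-1},
\]
since the bracketed quantity equals $-(\alpha-\beta)^2$. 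Because $\alpha\beta = -1$, this reduces to $(-1)^n\,\hat{\alpha}\hat{\beta}$, so the entire argument collapses to evaluating the bicomplex product $\hat{\alpha}\hat{\beta}$ explicitly.

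The key step is to multiply $\hat{\alpha} = 1 + i\alpha + j\alpha^2 + ij\alpha^3$ by $\hat{\beta} = 1 + i\beta + j\beta^2 + ij\beta^3$ using the rules $i^2 = j^2 = -1$, $ij = ji$, $(ij)^2 = 1$, and to collect coefficients on the basis $\{1, i, j, ij\}$. Each component is a symmetric polynomial in $\alpha$ and $\beta$ that I would simplify via $\alpha+\beta = k$, $\alpha\beta = -1$, and $\alpha^2 + \beta^2 = k^2 + 2$. A direct computation shows that the scalar part equals $1 - \alpha\beta - \alpha^2\beta^2 + \alpha^3\beta^3 = 0$, the $i$-part factors as $(\alpha+\beta)(1 - \alpha^2\beta^2) = 0$, the $j$-part becomes $(\alpha^2+\beta^2)(1-\alpha\beta) = 2(k^2+2)$, and the $ij$-part reduces to $(\alpha+\beta)(\alpha^2+\beta^2) = k^3+2k$. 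Substituting $\hat{\alpha}\hat{\beta} = 2(k^2+2)\,j + (k^3+2k)\,ij$ back into the expression above yields exactly the stated right-hand side.

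The main obstacle is the bookkeeping of the sixteen bicomplex cross terms in $\hat{\alpha}\hat{\beta}$ and recognizing the symmetric factorizations that force the scalar and $i$-components to vanish; once those cancellations are spotted, the remaining arithmetic in the elementary symmetric functions of $\alpha$ and $\beta$ is routine. As an independent cross-check, the identity can also be obtained by specializing D'Ocagne's identity (Theorem~3.5) at $m = n-1$: since $F_{k,1} = 1$ and the bicomplex product is commutative, the resulting expression matches (3.30) after the sign flip coming from swapping the two factors on the left-hand side.
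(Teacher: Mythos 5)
Your argument is correct, and it is a genuinely different route from the one in the paper. The paper proves (\ref{G30}) by brute force: it expands ${Q_F}_{k,n-1}\,{Q_F}_{k,n+1}-({Q_F}_{k,n})^2$ componentwise from the definition (\ref{G2}) and then invokes a list of auxiliary $k$-Fibonacci identities (${F}_{k,n-1}{F}_{k,n+1}-{F}_{k,n}^2=(-1)^{n}$, ${F}_{k,n-1}{F}_{k,n+3}-{F}_{k,n}{F}_{k,n+2}=(k^2+1)(-1)^n$, etc.) to evaluate each of the four coordinates separately. You instead substitute Binet's formula (\ref{G29}), use commutativity of $\mathbb{C}_2$ to collapse the difference to $-\hat{\alpha}\hat{\beta}\,(\alpha\beta)^{n-1}=(-1)^n\hat{\alpha}\hat{\beta}$, and then compute the single bicomplex product $\hat{\alpha}\hat{\beta}$; your factorizations of its four components check out (scalar and $i$-parts vanish since $\alpha^2\beta^2=1$, the $j$-part is $(\alpha^2+\beta^2)(1-\alpha\beta)=2(k^2+2)$, and the $ij$-part is $(\alpha+\beta)(\alpha^2+\beta^2)=k^3+2k$). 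Your approach buys a cleaner and more robust proof: it needs no external table of scalar identities, it makes transparent why the answer is independent of $n$ up to sign, and the same computation of $\hat{\alpha}\hat{\beta}$ immediately yields Catalan's identity (\ref{G31}) and d'Ocagne's identity (\ref{G25}) as well. The paper's componentwise approach is more elementary in that it does not presuppose Binet's formula, but it is longer and more error-prone (indeed the paper's own final line carries a spurious factor ${F}_{k,n-m}$ left over from the d'Ocagne computation). Your closing cross-check via Theorem 3.5 at $m=n-1$ is also valid, given commutativity and ${F}_{k,1}=1$. One cosmetic remark: the $\hat{\alpha}^2\alpha^{2n}$ and $\hat{\beta}^2\beta^{2n}$ terms cancel for free; commutativity is only needed to merge the two cross terms into $2\hat{\alpha}\hat{\beta}\alpha^n\beta^n$ and the corresponding pair in the product ${Q_F}_{k,n-1}\,{Q_F}_{k,n+1}$ --- worth stating precisely where that hypothesis enters.
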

\begin{proof} 
(\ref{G30}): By using (3.2) we get
\begin{equation*}
\begin{array}{rl}
{Q_F}_{k,n-1}\,{Q_F}_{k,n+1}-(\,{{Q}_F}_{k,n})^2=&\,[\,({F}_{k,n-1}{F}_{k,n+1}-{F}_{k,n}^2) \\
&\quad-({F}_{k,n}{F}_{k,n+2}-{F}_{k,n+1}^2) \\
&\quad-({F}_{k,n+1}{F}_{k,n+3}-{F}_{k,n+2}^2) \\
&\quad+({F}_{k,n+2}{F}_{k,n+4}-{F}_{k,n+3}^2)\,] \\
&+i\,[\,({F}_{k,n-1}{F}_{k,n+2}-{F}_{k,n}{F}_{k,n+1}) \\
&\quad-({F}_{k,n+1}{F}_{k,n+4}-{F}_{k,n+2}{F}_{k,n+3})\,] \\
&+j\,[\,({F}_{k,n-1}{F}_{k,n+3}-{F}_{k,n}{F}_{k,n+2}) \\
&\quad-({F}_{k,n}{F}_{k,n+4}-{F}_{k,n+1}{F}_{k,n+3})\\
&\quad+({F}_{k,n+1}{F}_{k,n+1}-{F}_{k,n+2}{F}_{k,n}) \\
&\quad-({F}_{k,n+2}{F}_{k,n+2}-{F}_{k,n+3}{F}_{k,n+1})\,] \\
&+i\,j\,({F}_{k,n-1}{F}_{k,n+4}-{F}_{k,n}{F}_{k,n+3}) \\
=&(-1)^{n}\,{F}_{k,n-m}[\,2(k^2+2)\,j+\,(k^3+2\,k)\,i\,j\,]. 
\end{array}
\end{equation*} 
where the identities of the k-Fibonacci numbers ${F}_{k,n-1}{F}_{k,n+1}-{F}_{k,n}^2=(-1)^{n}$ \cite{5}. Furthermore; 
\begin{equation*}
\left\{\begin{array}{l}
{F}_{k,n-1}{F}_{k,n+2}-{F}_{k,n}{F}_{k,n+1}=k\,(-1)^n\,\\
{F}_{k,n-1}{F}_{k,n+3}-{F}_{k,n}{F}_{k,n+2}=(k^2+1)\,(-1)^n\,,\\
{F}_{k,n+1}{F}_{k,n+3}-{F}_{k,n}{F}_{k,n+4}=(k^2+1)\,(-1)^n\,, \\ 
{F}_{k,n-1}{F}_{k,n+4}-{F}_{k,n}{F}_{k,n+3}=(k^3+2\,k)\,(-1)^n\,
\end{array}\right.
\end{equation*}
are used.   
\end{proof}
\begin{thm} \textbf{Catalan's Identity}
\\ 
Let ${Q_F}_{k,n+r}$ be the bicomplex k-Fibonacci quaternion. For $n\ge 1$, Catalan's identity for ${Q_F}_{k,n+r}$ is as follows: 
\begin{equation}\label{G31}
{Q_F}_{k,n+r-1}\,{Q_F}_{k,n+r+1}-({Q_F}_{k,n+r})^2=(-1)^{n+r}\,[\,2(k^2+2)\,j+\,(k^3+2\,k)\,i\,j\,].
\end{equation}
\end{thm}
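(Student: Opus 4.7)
The plan is to derive Catalan's identity via Binet's formula (Theorem 3.7), which bypasses the lengthy direct expansion of the product and makes transparent how the same bicomplex coefficient $2(k^2+2)j+(k^3+2k)ij$ emerges.

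Setting $m=n+r-1,\,n+r,\,n+r+1$ and writing ${Q_F}_{k,m}=(\hat{\alpha}\alpha^m-\hat{\beta}\beta^m)/(\alpha-\beta)$, I expand the left-hand side. The pure $\hat{\alpha}^2$ and $\hat{\beta}^2$ terms cancel, and the mixed terms factor as
$$\frac{\hat{\alpha}\hat{\beta}\,(\alpha\beta)^{n+r-1}\bigl(2\alpha\beta-\alpha^2-\beta^2\bigr)}{(\alpha-\beta)^2}=-\hat{\alpha}\hat{\beta}\,(\alpha\beta)^{n+r-1}.$$
Since $\alpha\beta=-1$, this equals $(-1)^{n+r}\hat{\alpha}\hat{\beta}$, so the proof reduces to evaluating the bicomplex product $\hat{\alpha}\hat{\beta}$.

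For that evaluation I expand $(1+i\alpha+j\alpha^2+ij\alpha^3)(1+i\beta+j\beta^2+ij\beta^3)$ using $i^2=j^2=-1$, $(ij)^2=1$, $ij=ji$, and group by basis element. With $\alpha+\beta=k$, $\alpha\beta=-1$, $\alpha^2+\beta^2=k^2+2$ and $\alpha^3+\beta^3=k^3+3k$, the scalar part $1-\alpha\beta-(\alpha\beta)^2+(\alpha\beta)^3$ and the $i$-coefficient $(\alpha+\beta)\bigl(1-(\alpha\beta)^2\bigr)$ both vanish, while the $j$-coefficient simplifies to $(\alpha^2+\beta^2)(1-\alpha\beta)=2(k^2+2)$ and the $ij$-coefficient to $(\alpha^3+\beta^3)+\alpha\beta(\alpha+\beta)=k^3+2k$. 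Hence $\hat{\alpha}\hat{\beta}=2(k^2+2)j+(k^3+2k)ij$, and substituting back yields the claimed identity.

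The only real obstacle is the sign bookkeeping in the sixteen-term expansion of $\hat{\alpha}\hat{\beta}$: because $(ij)^2=+1$ (in contrast to the quaternionic $k^2=-1$), several signs differ from the analogous Hamilton-quaternionic computation, so the coefficients of $j$ and $ij$ deserve careful verification. As a consistency check, setting $r=0$ recovers Cassini's identity (Theorem 3.8) exactly, confirming the final constants.
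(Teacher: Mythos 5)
Your proposal is correct, and it takes a genuinely different route from the paper. The paper proves the identity by brute force: it expands ${Q_F}_{k,n+r-1}\,{Q_F}_{k,n+r+1}-({Q_F}_{k,n+r})^2$ componentwise into sixteen products of k-Fibonacci numbers per basis element and then invokes a list of scalar identities quoted from the k-Fibonacci literature (e.g.\ ${F}_{k,n+r-1}{F}_{k,n+r+1}-{F}_{k,n+r}^2=(-1)^{n+r}$, ${F}_{k,n+r-1}{F}_{k,n+r+3}-{F}_{k,n+r}{F}_{k,n+r+2}=(-1)^{n+r}(k^2+1)$, and so on) to collapse each coefficient. Your Binet-formula argument instead reduces the whole left-hand side to $(-1)^{n+r}\hat{\alpha}\hat{\beta}$ and then evaluates the single bicomplex product $\hat{\alpha}\hat{\beta}$ via the symmetric functions $\alpha+\beta=k$, $\alpha\beta=-1$; I have checked that computation (scalar and $i$ parts vanish, $j$ part $(\alpha^2+\beta^2)(1-\alpha\beta)=2(k^2+2)$, $ij$ part $(\alpha^3+\beta^3)+\alpha\beta(\alpha+\beta)=k^3+2k$) and it is right. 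What your approach buys: it is self-contained (no imported list of auxiliary identities), it explains where the constant $2(k^2+2)j+(k^3+2k)ij$ comes from (it is exactly $\hat{\alpha}\hat{\beta}$), and the same two lines give Cassini's identity at $r=0$ and adapt immediately to d'Ocagne. What the paper's approach buys is that it stays at the level of the recurrence and does not presuppose Theorem 3.7. One small point worth making explicit in your write-up: combining the two cross terms $-\hat{\alpha}\hat{\beta}\,\alpha^{s-1}\beta^{s+1}-\hat{\beta}\hat{\alpha}\,\alpha^{s+1}\beta^{s-1}$ into a single multiple of $\hat{\alpha}\hat{\beta}$ uses the commutativity of the bicomplex algebra ($\hat{\alpha}\hat{\beta}=\hat{\beta}\hat{\alpha}$); this holds here but is precisely the step that fails for ordinary (Hamilton) Fibonacci quaternions, so it deserves a sentence.
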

\begin{proof}
(\ref{G31}): By using (3.2) we get 
\begin{equation*}
\begin{array}{rl}
{Q_F}_{k,n+r-1}\,{Q_F}_{k,n+r+1}-({Q_F}_{k,n+r})^2&=({F}_{k,n+r-1}{F}_{k,n+r+1}-{F}_{k,n+r}^2) \\
&\quad-({F}_{k,n+r}{F}_{k,n+r+2}-{F}_{k,n+r+1}^2)\\
&\quad({F}_{k,n+r+1}{F}_{k,n+r+3}-{F}_{k,n+r+2}^2) \\
&\quad+({F}_{k,n+r+2}{F}_{k,n+r+4}-{F}_{k,n+r+3}^2) \\ 
&+i\,[\,({F}_{k,n+r-1}{F}_{k,n+r+2})-({F}_{k,n+r}{F}_{k,n+r+1}) \\
&\quad-({F}_{k,n+r+1}{F}_{k,n+r+4}-{F}_{k,n+r+2}{F}_{k,n+r+3})\\ 
&+j\,[\,({F}_{k,n+r-1}{F}_{k,n+r+3}-{F}_{k,n+r}{F}_{k,n+r+2})\\
&\quad-({F}_{k,n+r}{F}_{k,n+r+4}-{F}_{k,n+r+1}{F}_{k,n+r+3}) \\
 &\quad+({F}_{k,n+r+1}{F}_{k,n+r+1}-{F}_{k,n+r+2}{F}_{k,n+r}) \\
 &\quad-({F}_{k,n+r+2}{F}_{k,n+r+2}-{F}_{k,n+r+3}{F}_{k,n+r+1})\,]\\
 &+i\,j\,[\,({F}_{k,n+r-1}{F}_{k,n+r+4}-{F}_{k,n+r}{F}_{k,n+r+3})\\
&\quad+({F}_{k,n+r}{F}_{k,n+r+3}-{F}_{k,n+r+1}{F}_{k,n+r+2}) \\
&\quad+({F}_{k,n+r+2}{F}_{k,n+r+1}-{F}_{k,n+r+3}{F}_{k,n+r})\,] \\
&=(-1)^{n+r}\,[\,2\,(k^2+2)\,j+\,(k^3+2\,k)\,i\,j\,] \\
\end{array}
\end{equation*}
where the identity of the k-Fibonacci numbers ${F}_{k,n+r-1}{F}_{k,n+r+1}-{F}_{k,n+r}^2=(-1)^{n+r}$ is used \cite{5}. Furthermore; 
\begin{equation*}
\left\{\begin{array}{l}
{F}_{k,n+r-1}\,{F}_{k,n+r+2}+{F}_{k,n+r}\,{F}_{k,n+r+1}=(-1)^{n+r}\,k,\\
{F}_{k,n+r-1}\,{F}_{k,n+r+3}-{F}_{k,n+r}\,{F}_{k,n+r+2}=(-1)^{n+r}\,(k^2+1),\\
{F}_{k,n+r+1}\,{F}_{k,n+r+3}-{F}_{k,n+r}\,{F}_{k,n+r+4}=(-1)^{n+r}\,(k^2+1),\\
{F}_{k,n+r-1}\,{F}_{k,n+r+4}-{F}_{k,n+r}\,{F}_{k,n+r+3}=(-1)^{n+r}\,(k^3+2\,k),\\
{F}_{k,n+r}\,{F}_{k,n+r+3}-{F}_{k,n+r+1}\,{F}_{k,n+r+2}=(-1)^{n+r+1}\,k,\\
{F}_{k,n+r+2}\,{F}_{k,n+r+1}-{F}_{k,n+r+3}\,{F}_{k,n+r}=(-1)^{n+r}\,k\,. 
\end{array}\right.
\end{equation*}
are used.
\end{proof} 
\section{Conclusion} 
In this paper, a number of new results on bicomplex k-Fibonacci quaternions were derived.  
This study fills the gap in the literature by providing the bicomplex k-Fibonacci quaternion using definitions of the bicomplex number \cite{18}, k-Fibonacci quaternion \cite{16} and the bicomplex Fibonacci quaternion \cite{1}.

\end{document}